\documentclass[12pt, a4paper, leqno]{amsart}
\usepackage[utf8]{inputenc}
\usepackage{amsmath, amsthm}
\usepackage{amsfonts}
\usepackage{amssymb}
\usepackage{times, fullpage}
\usepackage[T1]{fontenc} 
\usepackage{url} 
\usepackage{color,esint}
\usepackage[dvipsnames]{xcolor}
\usepackage[colorlinks, allcolors=RedViolet,pdfstartview=,pdfpagemode=UseNone]{hyperref} 
\usepackage{graphicx} 
\usepackage{enumitem}
\usepackage{tabularx}
\usepackage{mathtools}

\setlength{\oddsidemargin}{-0.25cm}
\setlength{\evensidemargin}{-0.25cm}
\setlength{\textwidth}{16.5cm}
\setlength{\topmargin}{0cm}
\setlength{\textheight}{24cm} 
\setlength{\marginparwidth}{2cm}
\let\oldmarginpar\marginpar
\renewcommand\marginpar[1]{\-\oldmarginpar[\raggedleft\footnotesize #1]%
{\raggedright\footnotesize #1}}

\newtheorem{thm}[equation]{Theorem}
\newtheorem{lem}[equation]{Lemma}
\newtheorem{cor}[equation]{Corollary}
\newtheorem{prp}[equation]{Proposition}

\theoremstyle{definition}

\newtheorem{dfn}[equation]{Definition}

\theoremstyle{remark}
\newtheorem{example}[equation]{Example}

\newtheorem{rem}[equation]{Remark}

\numberwithin{equation}{section}

\newcommand{\ainc}[1]{\hyperref[defn:aInc]{{\normalfont(aInc){\ensuremath{_{#1}}}}}}
\newcommand{\adec}[1]{\hyperref[defn:aDec]{{\normalfont(aDec){\ensuremath{_{#1}}}}}}
\newcommand{\inc}[1]{\hyperref[defn:Inc]{{\normalfont(Inc){\ensuremath{_{#1}}}}}}
\newcommand{\dec}[1]{\hyperref[defn:Dec]{{\normalfont(Dec){\ensuremath{_{#1}}}}}}
\newcommand{\azero}{\hyperref[def:a0]{{\normalfont(A0)}}}

\newcommand{\Phiw}{\Phi_{\text{\rm w}}}

\renewcommand{\phi}{\varphi}
\renewcommand{\epsilon}{\varepsilon}
\renewcommand{\rho}{\varrho}
\def\le{\leqslant}

\def\ge{\geqslant}

\def\phi{\varphi}
\def\rho{\varrho}
\def\vartheta{\theta}

\newcommand{\numberset}{\mathbb}
\newcommand{\N}{\numberset{N}}
\newcommand{\R}{\numberset{R}}

\DeclareMathOperator*{\esssup}{ess\,sup}
\DeclareMathOperator*{\essinf}{ess\,inf}

\def\loc{{\rm loc}}
\newcommand{\px}{{p(\cdot)}}

\begin{document}

\title[Convergence of generalized Orlicz norms]{Convergence of generalized Orlicz norms with lower growth rate tending to infinity}

\date{\today}

\author{Giacomo Bertazzoni}
\address{Giacomo Bertazzoni, 
Dipartimento di Scienze Fisiche, Informatiche e Matematiche, 
Università degli Studi di Modena e Reggio Emilia, VIA CAMPI 213/B, 41125 Modena, Italy}
\email{\texttt{239975@studenti.unimore.it}}

\author{Petteri Harjulehto}
\address{Petteri Harjulehto,
Department of Mathematics and Statistics,
FI-00014 University of Helsinki, Finland}
\email{\texttt{petteri.harjulehto@helsinki.fi}}

\author{Peter Hästö}
\address{Peter Hästö, Department of Mathematics and Statistics,
FI-20014 University of Turku, Finland}
\email{\texttt{peter.hasto@utu.fi}}

\begin{abstract}
We study convergence of generalized Orlicz energies when the lower 
growth-rate tends to infinity. We generalize results by 
Bocea--Mih\u{a}ilescu (Orlicz case) and Eleuteri--Prinari (variable exponent case) 
and allow weaker assumptions: we are also able to handle unbounded domains with irregular boundary 
and non-doubling energies. 
\end{abstract}

\keywords{$\Gamma$-convergence, generalized Orlicz, Musielak--Orlicz, 
supremal functionals, Young measures, level convex functions}
\subjclass[2020]{49J45 (46E35)}

\maketitle

\section{Introduction}

In this paper we consider $\Gamma$-convergence of a sequence of energies whose 
growth-rates tend to infinity. Let us first recap motivation and earlier studies. 
Garroni, Nesi and Ponsiglione \cite{GarNP01} studied dielectric breakdown using 
$\Gamma$-convergence of power-growth functions as the exponent tends to infinity. 
A more complicated functional of power-type was considered by Katzourakis \cite{Kat19} 
in a model for fluorescent optical tomography.
Prinari and co-authors \cite{ChaPP04, Pri15, PriZ20} considered an abstract version with 
the convergence of the energy 
\[
I_p(u) := \int_\Omega \tfrac1p f(x, u, Du)^p\, dx
\]
as $p\to \infty$. A similar problem without dependence on the derivative was considered 
in \cite{AnsP15, BocMP10, BocN08} and related to applications in polycrystal plasticity. 
If the energy is of non-standard growth-type \cite{Mar89}, 
then the $\Gamma$-limit can contain singular parts \cite{MinM05}. 

The energy $I_p$ has been generalized to two complementary cases: 
Orlicz growth by Bocea and Mih\u{a}ilescu \cite{BocM15} and variable exponent growth 
by Eleuteri and Prinari \cite{EleP21}. In this article, we consider 
the result in generalized Orlicz spaces, which covers both of these 
as special cases. Generalized Orlicz spaces (also known as Musielak--Orlicz spaces) 
and related PDE have been intensely studied recently, see, e.g., 
\cite{ChlGSW21, HarHL21, HasO22a, HurOS23, JiaWYYZ23} and references 
therein. Both the $L^\infty$- and the non-standard growth energy have been related to 
image processing \cite{CasMS98, CheLR06, HarH21}.

We follow the general approach of previous papers but we develop 
improved techniques with streamlined proofs and weaker assumptions. 
The tools used in \cite{BocM15, EleP21} led to the unexpected assumption that ratios of
the lower and upper growth-rates of the approximating functionals (denoted 
$\phi_n^\pm$ and $p_n^\pm$ in these papers) are bounded. 
We develop more precise estimates which allow us to avoid this assumption. 
Indeed, we need no assumptions about the upper growth-rate and can consider 
non-doubling functionals as well. 
For instance, we can handle the energy $|Du|^{n+1/|x|}$ that was excluded 
from previous results. In addition, we are able to deal with 
open sets of finite measure, whereas previous results required regularity of the boundary. 
We cover both norm- and modular-type energies with much the same method 
(cf.\ Theorems~\ref{thm:main-modular}). The general $\phi$-growth highlights the exact properties 
that are needed for the results more clearly than the $L^p$-case, 
as a comparison of the assumptions in 
Theorems~\ref{thm:main-norm} and \ref{thm:main-modular} shows, see also Example~\ref{eg:Einfty}.

We start by recalling some prior results related to Young measures and generalized 
Orlicz spaces. In Section~\ref{sect:embeddings}, we prove key tools for transferring 
results for constant exponents to the generalized Orlicz case. We conclude in 
Section~\ref{sect:main} with the main convergence results. 

\section{Auxiliary results}


Throughout the paper we always consider a 
domain $\Omega \subset \R^N$, $N\ge 2$, i.e.\ an open and connected set. 
We denote the set of measurable functions $f: \Omega \to \R^d$ by $L^0(\Omega, \R^d)$. 
When $d=1$, the second set is omitted and we write $L^0(\Omega)$; the same convention is used 
for other function spaces.
The Lebesgue and Borel measures on $\R^k$ are denoted $\mathcal{L}^k$ and $\mathcal{B}_k$, 
respectively, and $\mathcal{L}^k(A)$ is abbreviated $|A|$.

\begin{dfn}
We say that $f: \R^N \to \R$ is \emph{level convex} if 
the level-set $\{ \xi \in \R^N: f(\xi) \le t\}$ is convex for every $t \in \R$.
\end{dfn} 

The following is a substitute for Jensen's inequality for level convex functions.

\begin{lem}[Theorem~1.2, \cite{BJW}]\label{lem:jensen}
Let $f: \R^N \to \R$ be a lower semicontinuous and level convex function and 
let $\mu$ be a probability measure in the open set $U \subset \R^N$. Then 
\[
f\bigg(\int_U u\,d\mu \bigg) \le 
\mu\text{-}\!\esssup_U f \circ u
\]
for every $u \in L^1_{\mu}(U, \R^N)$.
\end{lem}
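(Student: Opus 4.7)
The plan is to reduce the level-convex Jensen inequality to the standard fact that a closed convex set is the intersection of the closed half-spaces containing it. Concretely, set
\[
t := \mu\text{-}\!\esssup_U f\circ u.
\]
If $t = +\infty$ there is nothing to prove, so assume $t \in \R$. The natural object to study is the sublevel set
\[
A := \{\xi \in \R^N : f(\xi) \le t\},
\]
which is convex by level convexity of $f$ and closed by lower semicontinuity. By definition of the essential supremum, $u(x) \in A$ for $\mu$-a.e.\ $x \in U$; in particular $A \neq \emptyset$.

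The goal is then simply to show that the mean value $\bar u := \int_U u \, d\mu$ belongs to $A$. If $A = \R^N$ there is nothing to do. Otherwise, by the Hahn--Banach separation theorem, $A$ equals the intersection of all closed half-spaces containing it. Fix such a half-space
\[
H = \{\xi \in \R^N : a \cdot \xi \le b\}, \qquad A \subseteq H.
\]
Since $u(x) \in A \subseteq H$ for $\mu$-a.e.\ $x$, we have $a \cdot u(x) \le b$ a.e., and integrating against the probability measure $\mu$ (legitimate since $u \in L^1_\mu(U,\R^N)$) yields $a \cdot \bar u \le b$, i.e.\ $\bar u \in H$. Intersecting over all half-spaces $H \supseteq A$ gives $\bar u \in A$, which is exactly the desired inequality $f(\bar u) \le t$.

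Before writing this up I would double-check two routine points: that $f \circ u$ is $\mu$-measurable (which follows because the lower semicontinuous function $f$ is Borel and $u$ is measurable, so the essential supremum is well defined), and that the integral $\int_U u\,d\mu \in \R^N$ exists componentwise under the $L^1_\mu$ assumption. The only place where care is really needed is the separation step: one must note that $A$ is nonempty and proper (otherwise the intersection of half-spaces argument is vacuous or trivial), but both degenerate cases are handled immediately as above. Beyond that there is no significant obstacle; the argument is essentially a one-line consequence of the geometric description of closed convex sets, once $A$ has been identified as the right object to separate from.
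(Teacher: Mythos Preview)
Your argument is correct. The paper itself does not supply a proof of this lemma; it simply quotes the statement from \cite{BJW} (Theorem~1.2 there) and uses it as a black box. What you wrote is the standard proof: identify the closed convex sublevel set $A=\{f\le t\}$, observe that $u$ takes values $\mu$-a.e.\ in $A$, and use that the barycentre of a probability measure supported in a closed convex subset of $\R^N$ lies in that set (via the half-space description of $A$). This is essentially the argument in \cite{BJW} as well, so there is no meaningful difference in approach. Your checks on measurability of $f\circ u$ and on the degenerate cases $A=\emptyset$, $A=\R^N$ are the right ones; note also that $t=-\infty$ cannot occur since $\mu$ is a probability measure and $f$ is real-valued.
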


\subsection*{Young measures}

We recall some results regarding Young measures following the 
presentation of Section~2.3, \cite{EleP21} (which is based on \cite{Mu}). 

\begin{dfn}
A function $f: \Omega \times \R^d \times \R^k \to \R$ is called a \emph{normal integrand} if
\begin{itemize}
\item[$\bullet$] $f$ is $\mathcal{L}^N\times\mathcal{B}_d\times\mathcal{B}_k$-measurable;
\item[$\bullet$] $f(x,\cdot,\cdot)$ is lower semicontinuous for a.e. $x \in \Omega$.
\end{itemize}
\end{dfn}

We refer to Corollary~2.10 in \cite{EleP21} for the next result. 
The measures $\mu_x$ in the lemma are the so-called Young measures. 

\begin{lem}\label{lem:youngMeasure}
Suppose that $\Omega$ is bounded and $u_n\rightharpoonup u$ in $W^{1,q}(\Omega; \R^d)$, 
$q\in (1,\infty)$. 
Then there exists a family of probability measures $(\mu_x)_{x\in \Omega}$ on $\R^{Nd}$ 
such that 
\[ 
Du(x) = \int_{\R^{Nd}} \xi \,d\mu_x (\xi)\quad\text{for a.e. } x \in \Omega 
\]
and, for any normal integrand $f: \Omega \times \R^d \times \R^{Nd} \to \R^+$, 
\[
\liminf_{n \to \infty} \int_{\Omega} f(x, u_n(x), Du_n(x)) \,dx 
\ge \int_{\Omega} \int_{\R^{Nd}} f(x, u(x), \xi) \,d\mu_x(\xi)\,dx.
\]
\end{lem}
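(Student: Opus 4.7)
The plan is to invoke the fundamental theorem of Young measures after exploiting the compact embedding $W^{1,q}(\Omega;\R^d) \hookrightarrow L^q(\Omega;\R^d)$ on the bounded domain $\Omega$. Since $u_n \rightharpoonup u$ in $W^{1,q}$, the gradients $\{Du_n\}$ are bounded in $L^q(\Omega;\R^{Nd})$; boundedness of $\Omega$ together with this $L^q$-bound gives tightness of $\{Du_n\}$ as measurable maps into $\R^{Nd}$. Hence, up to a (non-relabeled) subsequence, there is a family of probability measures $(\mu_x)_{x \in \Omega}$ on $\R^{Nd}$ such that $(\mu_x)$ is the Young measure generated by $(Du_n)$.

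Next I would identify the barycenter. The defining property of the Young measure, applied to the test function $\xi \mapsto \xi$ (which is admissible after truncation since $\{Du_n\}$ is equi-integrable in $L^1$ by its $L^q$-boundedness and $|\Omega|<\infty$), yields that $Du_n$ converges weakly in $L^1$ to $x\mapsto \int_{\R^{Nd}}\xi\,d\mu_x(\xi)$. But $Du_n\rightharpoonup Du$ in $L^q$ and hence in $L^1$, so by uniqueness of weak limits
\[
Du(x) = \int_{\R^{Nd}} \xi\,d\mu_x(\xi)\quad\text{for a.e.\ } x\in\Omega.
\]

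For the liminf inequality, I would use the Rellich--Kondrachov compact embedding $W^{1,q}(\Omega;\R^d)\hookrightarrow\hookrightarrow L^q(\Omega;\R^d)$ (valid because $\Omega$ is bounded and $q>1$) to upgrade $u_n\rightharpoonup u$ to strong convergence $u_n\to u$ in $L^q$, and hence $u_n(x)\to u(x)$ a.e.\ along a further subsequence. This implies that the joint sequence $(u_n,Du_n)$ generates the Young measure $\delta_{u(x)}\otimes\mu_x$: strong convergence of $u_n$ collapses the first marginal to a Dirac mass. Applying the standard Fatou-type lower semicontinuity result for Young measures to the nonnegative normal integrand $f$ then gives
\[
\liminf_{n\to\infty}\int_\Omega f(x,u_n,Du_n)\,dx \ge \int_\Omega\int_{\R^{Nd}} f(x,u(x),\xi)\,d\mu_x(\xi)\,dx.
\]
Nonnegativity of $f$ is what makes this Fatou step unconditional; it is precisely what allows us to dispense with any equi-integrability hypothesis on $f(\cdot,u_n,Du_n)$.

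The main subtlety — and where I would be most careful — is the identification of the joint Young measure as $\delta_{u(x)}\otimes\mu_x$. One has to check that a.e.\ (or measure-) convergence of the first component is enough to decouple it from the Young-measure limit of the second component, and that the resulting tensor product is indeed a valid probability measure on $\R^d\times\R^{Nd}$ against which a normal integrand can be integrated. This is the content underlying Corollary~2.10 of \cite{EleP21}, and invoking it directly is the cleanest route.
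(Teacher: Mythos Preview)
The paper does not actually prove this lemma: immediately before the statement it says ``We refer to Corollary~2.10 in \cite{EleP21} for the next result'' and gives no further argument. Your final paragraph lands on exactly the same conclusion---that invoking \cite[Corollary~2.10]{EleP21} directly is the cleanest route---so in that sense your proposal and the paper agree. What you add is a correct outline of the standard machinery underlying that citation (tightness of $(Du_n)$ from the $L^q$-bound, the fundamental theorem of Young measures, barycenter identification via equi-integrability of $(Du_n)$ in $L^1$, decoupling of the joint Young measure as $\delta_{u(x)}\otimes\mu_x$ once $u_n\to u$ in measure, and the Fatou-type lower bound for nonnegative normal integrands).

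One genuine technical slip: you justify strong convergence $u_n\to u$ in $L^q(\Omega)$ by claiming that the Rellich--Kondrachov embedding $W^{1,q}(\Omega;\R^d)\hookrightarrow\hookrightarrow L^q(\Omega;\R^d)$ holds ``because $\Omega$ is bounded and $q>1$''. This is false for a general bounded open set; compactness of that embedding requires some boundary regularity (e.g.\ an extension property), and there are well-known rooms-and-passages counterexamples. What you actually need is only convergence in measure (equivalently, a.e.\ convergence along a subsequence), and this you can obtain without any boundary hypothesis: cover $\Omega$ by countably many balls $B_i\Subset\Omega$, use the valid compact embedding $W^{1,q}(B_i)\hookrightarrow\hookrightarrow L^q(B_i)$ on each, and run a diagonal argument. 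Note also that in the paper's only application of this lemma (the proof of Theorem~\ref{thm:main-norm}) it is invoked on a ball $B\subset\Omega$, where Rellich--Kondrachov holds unconditionally, so the issue never arises downstream.
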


With the notation of the previous lemma and $v\in L^0(\Omega, \R^d)$, 
\cite[Remark~2.11]{EleP21} states that 
\[ 
\lim_{p \to \infty} \bigg( \int_{\Omega} \int_{\R^{Nd}} f(x, v(x), \xi)^p \,d\mu_x(\xi)\,dx\bigg)^{\frac{1}{p}} 
= 
\esssup_{x \in \Omega}\, \mu_x\text{-}\!\esssup_{\xi \in \R^k} f(x, v(x), \xi).
\]
Combined with the inequality 
$\mu_x\text{-}\!\esssup_{\xi \in \R^k} f(x, v(x), \xi) \ge f(x,v(x), Du(x))$
from Lemma~\ref{lem:jensen}, this implies the following:

\begin{cor}\label{cor:youngMeasure}
With the assumptions and notation of the previous lemma, we have 
\[
\lim_{p \to \infty} \bigg( \int_{\Omega} \int_{\R^{Nd}} f(x, v(x), \xi)^p \,d\mu_x(\xi)\,dx\bigg)^{\frac{1}{p}} 
\ge 
\esssup_{x \in \Omega} f(x, v(x), Du(x))
\]
provided that $f(x, z, \cdot)$ is level convex for every $z\in \R^d$ and a.e.\ $x\in \Omega$. 
\end{cor}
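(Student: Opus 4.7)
The plan is to combine the two ingredients just recalled in the text preceding the corollary: the limit identity from \cite[Remark~2.11]{EleP21} and the level convex Jensen inequality of Lemma~\ref{lem:jensen}.

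First, I would use the limit identity to rewrite the left-hand side as
\[
\esssup_{x \in \Omega}\, \mu_x\text{-}\!\esssup_{\xi \in \R^{Nd}} f(x,v(x),\xi),
\]
reducing the claim to the $x$-pointwise estimate $f(x, v(x), Du(x)) \le \mu_x\text{-}\!\esssup_\xi f(x,v(x),\xi)$ for a.e.\ $x \in \Omega$. To prove this pointwise estimate, I would apply Lemma~\ref{lem:jensen} for such $x$ with $U = \R^{Nd}$, probability measure $\mu_x$, level convex function $\xi \mapsto f(x,v(x),\xi)$, and vector field the identity $\xi \mapsto \xi$. The required lower semicontinuity of $f(x,v(x),\cdot)$ follows by restricting the jointly lower semicontinuous function $f(x,\cdot,\cdot)$ provided by the normal integrand definition, and the required $L^1_{\mu_x}$-integrability of the identity is implicit in Lemma~\ref{lem:youngMeasure}, which asserts that $\int_{\R^{Nd}} \xi\, d\mu_x(\xi) = Du(x)$. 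Lemma~\ref{lem:jensen} then gives
\[
f(x, v(x), Du(x)) = f\Bigl(x, v(x), \int_{\R^{Nd}} \xi\, d\mu_x(\xi)\Bigr) \le \mu_x\text{-}\!\esssup_{\xi \in \R^{Nd}} f(x,v(x),\xi).
\]

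Taking the essential supremum over $x \in \Omega$ on both sides and combining with the limit identity from the first step yields the desired inequality. There is no real obstacle in this argument; the only point requiring care is verifying that the a.e.\ hypotheses of the inputs (the normal integrand definition, the level convexity assumption, and the Young measure identity of Lemma~\ref{lem:youngMeasure}) all hold on a common full-measure subset of $\Omega$, which is routine since the intersection of countably many full-measure sets is of full measure.
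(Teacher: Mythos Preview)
Your proposal is correct and follows essentially the same approach as the paper: the paper's argument (given in the paragraph immediately preceding the corollary) combines the limit identity from \cite[Remark~2.11]{EleP21} with the pointwise inequality $\mu_x\text{-}\!\esssup_\xi f(x,v(x),\xi)\ge f(x,v(x),Du(x))$ obtained from Lemma~\ref{lem:jensen}, exactly as you do. Your additional care in checking the hypotheses of Lemma~\ref{lem:jensen} (lower semicontinuity, $L^1_{\mu_x}$-integrability of the identity, common full-measure set) is appropriate but not spelled out in the paper.
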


\subsection*{Generalized Orlicz spaces}
In this subsection, we provide background on generalized Orlicz spaces. 
For more details, see \cite{ChlGSW21, HarH19}.

The next condition, which measures the lower growth-rate, 
can be stated as $\frac{f(t)}{t^p}$ being almost increasing (hence the abbreviation). 
We use an equivalent form which is easier to apply. 

\begin{dfn}\label{defn:aInc}\label{defn:aDec}
Let $f: \Omega\times [0,\infty) \to \R$ and $p > 0$. We say that $f$ satisfies 
\ainc{p} if there exists $L\ge 1$ such that $f(x,\lambda t)\le L \lambda^p f(x,t)$ 
for a.e.\ $x\in \Omega$ and all $t\ge 0$, $\lambda \le 1$.
\end{dfn}

Note that if $\phi$ satisfies \ainc{p_1}, then it satisfies \ainc{p_2}, for every $p_2 < p_1$. 
The condition \adec{p}, related to upper growth-rate, is defined by the same inequality 
when $\lambda \ge 1$, but we do not need it in this paper. 

\begin{dfn}\label{def:Phi-function}
We say that $\phi: \Omega \times [0,\infty) \to [0,\infty]$ is a 
\emph{generalized weak $\Phi$-function} and write $\phi \in \Phiw(\Omega)$ if 
\begin{itemize}
\item
$x \mapsto \phi(x, |f(x)|)$ is measurable for every $f \in L^0(\Omega)$,
\item
$\phi(x, 0) = 0$, $\lim_{t \to 0^+} \phi(x, t) = 0$ and $\lim_{t \to \infty} \phi(x, t) = \infty$ 
for a.e. $x \in \Omega$, 
\item
$t \mapsto \phi(x, t)$ is increasing for a.e. $x \in \Omega$, 
\item
$\phi$ satisfies \ainc{1}.
\end{itemize}
If $\phi$ does not depend on $x$, then we omit the set and write $\phi \in \Phiw$.
\end{dfn}

We can now define generalized Orlicz spaces. Example~\ref{eg:Linfty} shows that this framework covers 
also $L^\infty$-spaces without the need for special cases. This is of special 
importance in this article as we consider the limit when the growth-rate tends to infinity. 

\begin{dfn}
Let $\phi \in \Phiw(\Omega)$ and let the modular $\rho_{\phi}$ be given by
\[ 
\rho_{\phi}(f) := \int_{\Omega} \phi(x, |f(x)|)\,dx
\]
for $f \in L^0(\Omega)$. The set
\[ 
L^{\phi}(\Omega) := \{ f \in L^0(\Omega) : \rho_{\phi}(\lambda f) < \infty \text{ for some } \lambda > 0  \}
\]
is called a \emph{generalized Orlicz space}. It is equipped with the Luxenburg quasinorm
\[ 
\|f\|_\phi 
:= 
\inf\Big\{ \lambda > 0 : \rho_{\phi}\Big(\frac{f}{\lambda}\Big) \le 1\Big\}.
\]
\end{dfn} 


\begin{example}\label{eg:Linfty}
Define $\phi_\infty\in \Phiw(\Omega)$ by $\phi_\infty(x, t):=\infty \chi_{(1,\infty)}(t)$. 
Then $\phi_\infty$ is a generalized weak $\Phi$-function. From the definition 
of modular we see that 
\[
\rho_{\phi_\infty}(f) = 
\begin{cases}
0, &\text{if } |f|\le 1 \text{ a.e.} \\
\infty, &\text{otherwise}.
\end{cases}
\]
It follows from the Luxemburg norming procedure that $\|\cdot\|_{\phi_\infty}=\|\cdot\|_\infty$
and so $L^{\phi_\infty}(\Omega)=L^\infty(\Omega)$. 
\end{example}

Other examples of generalized Orlicz spaces are (ordinary) Orlicz spaces where $\phi(x,t)$ is independent of $x$, variable exponent spaces $L^{\px}$ where $\phi(x, t) = t^{p(x)}$ \cite{DieHHR11},  double phase spaces $\phi(x, t) = t^p + a(x) t^q$ \cite{BarCM18, GasP23}, variable exponent double phase spaces $\phi(x, t)= t^{p(x)} + a(x) t^{q(x)}$  \cite{CBGHW22} and many other variants (e.g.\ \cite{BaaB22}). 
The Orlicz--Sobolev space is defined based on $L^\phi(\Omega)$ as usual: 

\begin{dfn}
Let $\phi \in \Phiw(\Omega)$. The function $u \in L^{\phi}(\Omega) \cap W^{1,1}_\loc(\Omega)$ belongs to the \emph{Orlicz--Sobolev space $W^{1,\phi}(\Omega)$}, if the weak partial derivatives $\frac{\partial u}{\partial x_i}$, $i = 1, \dots, N$, belong to $L^{\phi}(\Omega)$. We define a modular and quasinorm on $W^{1, \phi}(\Omega)$ by
\[ 
\rho_{1, \phi}(u): = \rho_\phi(u) + \sum_{i = 1}^N \rho_\phi\bigg(\frac{\partial u}{\partial x_i}\bigg) 
\quad\text{and}\quad
\|u\|_{1,\phi} := \inf\Big\{ \lambda > 0 : \rho_{1,\phi} \Big(\frac{u}{\lambda}\Big) \le 1\Big\}.
\]
\end{dfn}

\section{Asymptotically sharp embeddings}\label{sect:embeddings}

The unit ball property is a fundamental relation between quasinorm and modular. 
Note that we do not assume lower semi-continuity of $\phi$ which complicates 
the relationship is a bit. 

\begin{lem}[Unit ball property, Lemma 3.2.3, \cite{HarH19}] \label{lemma:Unit-ball}
If $\phi \in \Phiw(\Omega)$, then
\[ 
\|f\|_{\phi} < 1 
\quad\Rightarrow\quad 
\rho_{\phi}(f) \le 1 
\quad\Rightarrow\quad 
\|f\|_{\phi} \le 1.
\]
\end{lem}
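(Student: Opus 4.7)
The plan is to exploit two fundamental features built into $\Phiw(\Omega)$: the monotonicity of $t \mapsto \phi(x,t)$ and the infimum-based definition of the Luxemburg quasinorm. Both implications should reduce to direct manipulations of the defining infimum, so no use of \ainc{1} or any more sophisticated ingredient should be necessary.

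For the right-hand implication, my approach is to observe that the assumption $\rho_\phi(f) \le 1$ says precisely that $\lambda = 1$ lies in the set $\{\lambda > 0 : \rho_\phi(f/\lambda) \le 1\}$ whose infimum defines $\|f\|_\phi$. Hence $\|f\|_\phi \le 1$ is immediate. For the left-hand implication, I would begin from $\|f\|_\phi < 1$ and use the defining property of the infimum to pick some $\lambda$ with $\|f\|_\phi < \lambda < 1$ and $\rho_\phi(f/\lambda) \le 1$. Since $\lambda < 1$, the pointwise inequality $|f(x)| \le |f(x)|/\lambda$ holds, so monotonicity of $\phi(x,\cdot)$ gives $\phi(x, |f(x)|) \le \phi(x, |f(x)|/\lambda)$ for a.e.\ $x$; integrating over $\Omega$ yields $\rho_\phi(f) \le \rho_\phi(f/\lambda) \le 1$.

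The main subtlety, flagged already in the remark preceding the lemma, is the absence of lower semi-continuity of $\phi$ and hence of $\rho_\phi$. With lower semi-continuity the infimum in the definition of $\|f\|_\phi$ would be attained, so one could strengthen the left-hand implication to $\|f\|_\phi \le 1 \Rightarrow \rho_\phi(f) \le 1$; without it, the strict inequality on the left is genuinely needed, because $\rho_\phi(f/\lambda) \le 1$ for every $\lambda > \|f\|_\phi$ does not in general pass to the endpoint $\lambda = \|f\|_\phi$. Aside from being careful to use a strict separation $\|f\|_\phi < \lambda < 1$ rather than $\lambda = \|f\|_\phi$, I do not foresee any real obstacle; the argument is essentially bookkeeping on the defining infimum together with monotonicity.
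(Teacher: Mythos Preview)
Your proof is correct. Note, however, that the paper does not actually prove this lemma: it is quoted as Lemma~3.2.3 of \cite{HarH19} and used as a black box, so there is no in-paper argument to compare against. Your argument is the standard one and matches what one finds in \cite{HarH19}.
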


This next property anchors $\phi$ at $1$. 
It follows from \ainc{1} that if $\frac1c \le \phi(x, 1) \le c$ for a.e.\ $x \in \Omega$, 
then $\phi$ satisfies \azero{} with $\beta := \frac1{Lc}$.

\begin{dfn}\label{def:a0}
We say that a  function $\phi \in \Phiw(\Omega)$ satisfies \azero{} if there exists $\beta \in (0,1]$ such that $\phi(x,\beta) \le 1 \le \phi(x, \frac{1}{\beta})$ for a.e.\ $x \in \Omega$.
\end{dfn}

The following proposition plays a significant role in reducing our main problem to 
the classic Lebesgue space $L^q(\Omega)$, where we can use the results about Young measures previously presented.
The embedding is well-known (see \cite[Lemma 3.7.7]{HarH19}) but we need a version 
with an asymptotically sharp constant as $p\to\infty$.

\begin{prp}\label{prop:embedding}
Let $\Omega$ have finite measure, $\phi\in \Phiw(\Omega)$ and $c, L\ge 1$. Assume  that 
\begin{itemize}
\item $\frac1c \le \phi(x,1)\le c$ 
for a.e.\ $x \in \Omega$;
\item $\phi$ satisfies \ainc{p} with constant $L$ for some $p\ge 1$.
\end{itemize}
Then $L^{\phi}(\Omega) \hookrightarrow L^p(\Omega)$ and 
\[  
\|\cdot\|_{L^p} \le (2L(|\Omega|+c))^\frac1p \|\cdot\|_{\phi}.
\]
\end{prp}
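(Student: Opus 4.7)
The plan is to use the unit ball property to convert the norm bound on $f$ into a modular bound, and then use \ainc{p} to pointwise compare $|f|^p$ with $\phi(x,|f|)$ on the set where $|f|$ is large.

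Concretely, fix $f\in L^\phi(\Omega)$ and pick any $\lambda>\|f\|_\phi$. Then $\|f/\lambda\|_\phi<1$, so the second implication of the unit ball property (Lemma~\ref{lemma:Unit-ball}) gives $\rho_\phi(f/\lambda)\le 1$. The goal is then to estimate $\int_\Omega (|f|/\lambda)^p\,dx$ from above by an absolute constant.

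For the pointwise comparison, I would split $\Omega$ into $\{|f|\le\lambda\}$ and $\{|f|>\lambda\}$. On the first set trivially $(|f|/\lambda)^p\le 1$, contributing at most $|\Omega|$ after integration. On the second set write $t:=|f(x)|/\lambda>1$ and apply \ainc{p} with scaling factor $\mu:=1/t<1$: this yields
\[
\phi(x,1)=\phi(x,\mu t)\le L\mu^p\,\phi(x,t)=\frac{L}{t^p}\,\phi\bigl(x,|f(x)|/\lambda\bigr),
\]
so, using the lower bound $\phi(x,1)\ge 1/c$, we get
\[
\bigl(|f(x)|/\lambda\bigr)^p \le \frac{L}{\phi(x,1)}\,\phi\bigl(x,|f(x)|/\lambda\bigr) \le Lc\,\phi\bigl(x,|f(x)|/\lambda\bigr).
\]
Combining the two pieces and using $\rho_\phi(f/\lambda)\le 1$,
\[
\int_\Omega (|f|/\lambda)^p\,dx \;\le\; |\Omega| + Lc\,\rho_\phi(f/\lambda) \;\le\; |\Omega|+Lc \;\le\; L(|\Omega|+c) \;\le\; 2L(|\Omega|+c).
\]
Taking $p$-th roots and letting $\lambda\downarrow\|f\|_\phi$ gives the claimed estimate (indeed with a slightly better constant than $2L(|\Omega|+c)$, the factor $2$ is slack).

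The only subtlety — and the main thing to get right — is the reversed use of \ainc{p}: the condition is usually applied to bound $\phi(x,\mu t)$ by $\phi(x,t)$ for $\mu\le 1$, but here one needs to read it in the opposite direction to bound $t^p$ in terms of $\phi(x,t)$, which requires both $t\ge 1$ (so that the exponent $\mu=1/t$ is admissible) and the anchoring $\phi(x,1)\ge 1/c$ from \azero{}-type control. Working with $\lambda>\|f\|_\phi$ and passing to the infimum sidesteps the absence of a general $\rho_\phi(f/\|f\|_\phi)\le 1$ inequality (which would require lower semicontinuity of $\phi$); everything else is a one-line splitting of the integral.
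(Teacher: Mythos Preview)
Your proof is correct and uses the same core ingredients as the paper --- the unit ball property, the \ainc{p} inequality read as $t^p \le Lc\,\phi(x,t)$ for $t\ge 1$, and the anchoring $\phi(x,1)\ge 1/c$ --- but organized in the mirror-image direction. The paper normalizes by the $L^p$-norm and shows $\rho_\phi(Cu/\|u\|_p)>1$; to do this it first derives a single pointwise lower bound $\phi(x,t)\ge \tfrac{1}{Lc}t^p - \tfrac{1}{c}$ valid for \emph{all} $t\ge 0$ (subtracting the constant $\tfrac1c$ makes the inequality trivially true for small $t$), and then integrates. You instead normalize by (approximately) the $\phi$-norm and bound $\int (|f|/\lambda)^p$ directly, replacing the affine-lower-bound trick by the explicit split $\{|f|\le\lambda\}\cup\{|f|>\lambda\}$. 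Your packaging is marginally cleaner: it avoids the factor $2$ (you actually obtain the constant $(|\Omega|+Lc)^{1/p}$), and the limit $\lambda\downarrow\|f\|_\phi$ handles the possible failure of left-continuity of $\phi$ without needing a strict inequality on the modular side.
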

\begin{proof}
The condition \azero{} follows from $\frac1c \le \phi(x,1) \le c$ and \ainc{p}, and so the embedding holds \cite[Lemma~3.7.7]{HarH19}. By the unit ball property (Lemma~\ref{lemma:Unit-ball}), the 
norm inequality $\|u\|_{L^p} \le C \|u\|_{\phi}$ follows once we prove that 
\[
\rho_\phi\bigg(\frac{C\,|u|}{\|u\|_p}\bigg) = \int_\Omega \phi\bigg(x,\frac{C\,|u|}{\|u\|_p}\bigg)\, dx > 1.
\]
Let us derive a lower bound for $\phi$. By \ainc{p}, 
\[
\phi(x, t) \ge \frac1L t^p \phi(x,1) \ge \frac1{Lc} t^p
\]
when $t\ge L^{1/p}\ge 1$. If we subtract $\frac1c$ from the right-hand side, we obtain a function which 
is negative when $t< L^{1/p}$ and therefore a suitable lower bound also when $t$ is in this range. 
It follows that 
\[
\phi(x, t) \ge  \frac1{Lc} t^p - \frac1c
\]
for every $t\ge 0$. Applying this in the modular, we see that 
\[
\int_\Omega \phi\bigg(x,\frac{C\,|u|}{\|u\|_p}\bigg)\, dx
\ge
\int_\Omega \frac1{Lc}\bigg(\frac{C\,|u|}{\|u\|_p}\bigg)^p - \frac1c\, dx
=
\frac{C^p}{Lc} - \frac{|\Omega|}{c}.
\]
Thus the desired norm-inequality holds if the right-hand side is greater than $1$, 
which is equivalent to $C^p > L(|\Omega|+c)$.
\end{proof}

The next proposition essentially takes care of the $\limsup$-part of the estimate in 
the main results. The almost increasing assumption is satisfied (with $L=1$) 
for example when $\phi_n^{1/p_n}$ is convex.

\begin{prp}\label{prop}
Let $\Omega$ have finite measure, 
$\phi_n\in \Phiw(\Omega)$ for $n\in\N$ and $c, L\ge 1$. Assume for $n\in\N$ that 
\begin{itemize}
\item $\frac1c \le \phi_n(x,1) \le c$ for a.e.\ $x \in \Omega$;
\item $\phi_n$ satisfies \ainc{p_n} with constant $L$ for some $p_n\ge 1$.
\end{itemize}
If $p_n\to \infty$, then $\lim_{n \to \infty} \|u\|_{\phi_n} = \|u\|_{\infty}$ for all $u \in L^{\infty}(\Omega)$.
\end{prp}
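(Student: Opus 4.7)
The plan is to prove both inequalities $\limsup_{n\to\infty}\|u\|_{\phi_n}\le \|u\|_\infty$ and $\liminf_{n\to\infty}\|u\|_{\phi_n}\ge \|u\|_\infty$ separately. Both bounds are straightforward once we exploit the scaling behavior encoded in \ainc{p_n}, the anchoring $\phi_n(x,1)\le c$, and the unit ball property.

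For the $\limsup$-bound, we may assume $\|u\|_\infty>0$, since otherwise $u=0$ a.e.\ and the statement is trivial. Fix any $\mu>\|u\|_\infty$, so $|u(x)|/\mu\le 1$ for a.e.\ $x$. Applying \ainc{p_n} with $\lambda=|u(x)|/\mu$ and $t=1$, together with the upper anchor $\phi_n(x,1)\le c$, gives
\[
\phi_n\!\left(x,\frac{|u(x)|}{\mu}\right)\le L\left(\frac{|u(x)|}{\mu}\right)^{p_n}\phi_n(x,1)\le Lc\left(\frac{\|u\|_\infty}{\mu}\right)^{p_n}
\]
for a.e.\ $x\in\Omega$. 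Integrating yields
\[
\rho_{\phi_n}\!\left(\frac{u}{\mu}\right)\le Lc\,|\Omega|\left(\frac{\|u\|_\infty}{\mu}\right)^{p_n}.
\]
Since $\|u\|_\infty/\mu<1$ and $p_n\to\infty$, the right-hand side tends to $0$, so for all $n$ large enough $\rho_{\phi_n}(u/\mu)\le 1$. By Lemma~\ref{lemma:Unit-ball} this gives $\|u\|_{\phi_n}\le\mu$ for large $n$, hence $\limsup_{n\to\infty}\|u\|_{\phi_n}\le\mu$. Letting $\mu\searrow\|u\|_\infty$ completes this direction.

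For the $\liminf$-bound, apply Proposition~\ref{prop:embedding} with $p=p_n$: for every $n$,
\[
\|u\|_{L^{p_n}(\Omega)}\le \bigl(2L(|\Omega|+c)\bigr)^{1/p_n}\|u\|_{\phi_n}.
\]
Because $u\in L^\infty(\Omega)$ and $\Omega$ has finite measure, the classical fact $\lim_{p\to\infty}\|u\|_{L^p(\Omega)}=\|u\|_\infty$ applies. Combined with $(2L(|\Omega|+c))^{1/p_n}\to 1$, this gives
\[
\liminf_{n\to\infty}\|u\|_{\phi_n}\ge \liminf_{n\to\infty}\bigl(2L(|\Omega|+c)\bigr)^{-1/p_n}\|u\|_{L^{p_n}(\Omega)}=\|u\|_\infty,
\]
which finishes the proof.

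There is no real obstacle here: the two anchoring hypotheses (aInc) and $\phi_n(x,1)\asymp 1$ are precisely what make the modular comparable to $t^{p_n}$ both from above (pointwise for $|u|/\mu\le 1$) and from below (in the integrated sense, via Proposition~\ref{prop:embedding}). The only mild care needed is to sandwich $\|u\|_{\phi_n}$ between scaled $L^{p_n}$-type quantities and invoke $\|u\|_{L^{p_n}}\to\|u\|_\infty$ on a finite-measure set.
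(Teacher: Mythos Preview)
Your proof is correct. The $\limsup$ half is essentially identical to the paper's argument (the paper normalizes to $\|u\|_\infty=1$ and writes the parameter as $1-\epsilon$ instead of $\mu$, but the modular estimate is the same). For the $\liminf$ half the paper proceeds directly rather than through Proposition~\ref{prop:embedding}: after normalizing, it fixes $\epsilon>0$, observes that the set $A:=\{(1+\epsilon)|u|>1+\tfrac\epsilon2\}$ has positive measure, and uses \ainc{p_n} together with the lower anchor $\phi_n(x,1)\ge \tfrac1c$ to get
\[
\rho_{\phi_n}\big((1+\epsilon)u\big)\ge \int_A \phi_n\big(x,1+\tfrac\epsilon2\big)\,dx\ge \tfrac1{Lc}(1+\tfrac\epsilon2)^{p_n}|A|\to\infty,
\]
whence $\|u\|_{\phi_n}>\tfrac1{1+\epsilon}$ for large $n$. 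Your route via the sharp embedding and the classical $\|u\|_{L^{p_n}}\to\|u\|_\infty$ is slightly more economical, since it recycles Proposition~\ref{prop:embedding}; the paper's argument is self-contained and makes the role of the positive-measure level set explicit. Either way, the same two hypotheses are doing all the work.
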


\begin{proof}
If $\|u\|_\infty=0$, then $u=0$ and there is nothing to prove. 
It suffices to consider the case $\|u\|_\infty=1$, since the general case can be reduced to it by 
the normalization $\tilde u := \frac u{\|u\|_\infty}$. 

By the definition of limit, we have to find for every $\epsilon \in (0,1)$ a number $n_0 \in \N$ such that  
\[ 
\frac1{1+\epsilon} \le \|u\|_{\phi_n} \le \frac1{1-\epsilon}
\]
for all $n \ge n_0$.
By the unit ball property (Lemma~\ref{lemma:Unit-ball}) this inequality follows from
\[ 
\rho_{\phi_n}((1-\epsilon)u) \le 1 <\rho_{\phi_n}((1+\epsilon)u). 
\]

Fix $\epsilon \in (0,1)$. 
By \ainc{p_n}, $\phi_n(x, 1 -\epsilon) \le L(1 -\epsilon)^{p_n}\phi_n(x, 1)
\le Lc(1 -\epsilon)^{p_n}$. 
Since $|u| \le \|u\|_\infty = 1$ a.e.\ and $\phi_n$ is increasing, 
we conclude that
\[ 
\rho_{\phi_n}((1-\epsilon)u)=\int_{\Omega}\phi_n(x, (1 - \epsilon) |u| )\, dx 
\le \int_{\Omega}\phi_n(x, 1 -\epsilon)\, dx
\le Lc(1 -\epsilon)^{p_n} |\Omega| \overset{n \to \infty}{\longrightarrow} 0.
\]
Thus we can find $n_0$ such that $\rho_{\phi_n}((1-\epsilon)u)<1$ for all $n \ge n_0$. 

To deal with the second inequality, we consider $A:=\{(1+\epsilon)|u| > 1+\frac\epsilon2\}$. 
Since $\|u\|_\infty=1$, it follows that $|A| > 0$. By \ainc{p_n}, 
\[
\phi_n(x, 1+\tfrac\epsilon2)\ge \tfrac1L(1+\tfrac\epsilon2)^{p_n}\phi_n(x, 1)
\ge \tfrac1{Lc}(1+\tfrac\epsilon2)^{p_n}.
\] 
The inequality $1 <\rho_{\phi_n}((1+\epsilon)u)$ follows for large $n$ from the estimate
\[ 
\int_{\Omega} \phi_n(x, (1+\epsilon)|u|)\,dx 
\ge \int_A \phi_n(x, 1+\tfrac\epsilon2)\,dx
\ge
\tfrac1{Lc}(1+\tfrac\epsilon2)^{p_n} |A|
\overset{n \to \infty }{\longrightarrow } \infty.  \qedhere
\]
\end{proof}

The previous two results used the assumption $\frac1c \le \phi_n(x,1) \le c$ which is 
stronger that \azero{}. This is because \azero{} is not sufficient, as the next example demonstrates.

\begin{example}
Define $\phi_n\in \Phiw$ by $\phi_n(t):=(a_n t)^n$, where $\beta \le a_n\le \frac1\beta$. 
Then $\phi_n$ satisfies \azero{} with constant $\beta$. If $a_n\to a \ne 1$, then 
$\|\cdot\|_{\phi_n} \to \|\cdot\|_{\phi_{a,\infty}} = a \|\cdot\|_\infty$, where 
$\phi_{a,\infty}(t) := \infty \chi_{(1/a, \infty)}(t)$. Therefore, the 
conclusion of the previous proposition does not hold in this case. 
If $a>1$, then the inequality $\|u\|_{\phi_n}\le c_n\|u\|_\infty$
of Proposition~\ref{prop:embedding} similarly does not 
hold with any constant $c_n\to 1$. 
\end{example}

The second example shows that 
the assumption that each $\phi_n$ satisfy \ainc{p_n} with the same constant $L$ is also needed 
for the conclusion.

\begin{example}
Define $\phi\in \Phiw$ by $\phi(t):= \max\{0, 2t-1\} + \phi_\infty(t)$.
We consider the constant sequence $\phi_n=\phi$, whose Luxemburg quasinorms 
converge to $\|\cdot\|_{\phi}$, not $\|\cdot\|_\infty$. Since $\phi_n(1)=1$, the first condition 
of Proposition~\ref{prop} is satisfied. 
Let $(p_n)$ be a sequence converging to $\infty$.
We show that $\phi_n$ satisfies  \ainc{p_n} with a constant $L_n\ge 1$. 
If $t \le \frac12$, then $\phi(t) =0$, and if $t >1$, then $\phi(t) = \infty$.
Thus it is enough to show the 
second inequality in the following when $\frac12 \le \lambda t \le t \le 1$:
\[
\phi_n(\lambda t) = 2\lambda t - 1 \le (2t-1) \lambda 
\le 
L_n\lambda^{p_n} (2t-1) = L_n\lambda^{p_n} \phi_n(t).
\]
This inequality holds when $L_n \ge \lambda^{1-p_n}$. Since $\lambda\ge\frac12$, 
the choice $L_n := 2^{p_n-1}$ works for \ainc{p_n}. 
\end{example}

Note that both examples are of Orlicz type as the dependence of $\phi$ on $x$ is not used here.

\section{Main theorems}\label{sect:main}

Let $\phi\in\Phiw(\Omega)$ and $f: \Omega \times \R^d \times \R^{Nd} \to \R$. 
We define $E_\phi, F_\phi : L^1(\Omega, \R^d) \to [0,\infty]$  by 
\[
E_\phi(u) := \begin{cases} 
\rho_\phi(f(\cdot, u, Du)) &\text{if } u \in W^{1,1}_\loc(\Omega, \R^d), \\ 
\infty &\text{otherwise},
\end{cases}
\]
and
\[
F_\phi(u) := \begin{cases} 
\|f(\cdot, u, Du)\|_\phi &\text{if } u \in W^{1,1}_\loc(\Omega, \R^d), \\ 
\infty &\text{otherwise}.
\end{cases}
\]
The abbreviations $E_\infty$ and $F_\infty$ refer to the case
$\phi=\phi_\infty$ from Example~\ref{eg:Linfty}, related $L^\infty$. 

\begin{rem}
It follows from Theorems~\ref{thm:main-norm} and \ref{thm:main-modular} 
that $(F_{\phi_n})$ and $(E_{\phi_n})$ $\Gamma$-converge to $F_\infty$ and $E_\infty$ respectively, with respect to the $L^1$-weak topology. Specifically, we prove the 
$\liminf$-property of $\Gamma$-convergence and show that we can use a constant recovery 
sequence; we refer the reader to \cite{Bra02} for the definitions related to $\Gamma$-convergence
which are not otherwise needed here. 
\end{rem}

\begin{thm}\label{thm:main-norm}
Assume that $\Omega$ has finite measure. Let 
$f: \Omega \times \R^d \times \R^{Nd} \to \R$ be a normal integrand such that 
\begin{itemize}
\item
$f(x,u,\cdot)$ is level convex for a.e.\ $x \in \Omega$ and every $u \in \R^d$;
\item
$f(x, u, \xi) \ge \alpha|\xi|^{\gamma}$ 
for some $\alpha, \gamma > 0$, a.e.\ $x \in \Omega$ and every $(u,\xi) \in \R^d \times \R^{Nd}$. 
\end{itemize}
Let $\phi_n\in \Phiw(\Omega)$ for $n\in\N$ and $c, L\ge 1$ and assume that 
\begin{itemize}
\item $\frac1c \le \phi_n(x,1) \le c$ for a.e.\ $x \in \Omega$;
\item $\phi_n$ satisfies \ainc{p_n} with constant $L$ for some $p_n\ge 1$.
\end{itemize}
If $p_n \to \infty$ as $n \to \infty$, then
\[ 
\limsup _{n \to \infty} F_{\phi_n}(u) \le  F_\infty(u) 
\le \liminf _{n \to \infty} F_{\phi_n}(u_n)
\]
for all $u, u_n \in L^1(\Omega, \R^d)$ with $u_n \rightharpoonup u$ in $L^1(\Omega, \R^d)$.
\end{thm}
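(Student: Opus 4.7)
\textbf{Plan for the proof of Theorem~\ref{thm:main-norm}.}

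The two inequalities are quite different in character and I would treat them separately.

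For the $\limsup$-inequality I would reduce immediately to the case $F_\infty(u)<\infty$, since otherwise there is nothing to prove. In that case $u\in W^{1,1}_\loc(\Omega,\R^d)$ and the function $v:=f(\cdot,u,Du)$ belongs to $L^\infty(\Omega)$. Since $F_{\phi_n}(u)=\|v\|_{\phi_n}$, Proposition~\ref{prop} applied to $v$ gives directly $\lim_n F_{\phi_n}(u)=\|v\|_\infty=F_\infty(u)$, and in particular the desired $\limsup$-bound. So the constant sequence serves as a recovery sequence.

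For the $\liminf$-inequality I would assume $M:=\liminf_n F_{\phi_n}(u_n)<\infty$ (otherwise trivial) and pass to a subsequence realising the $\liminf$. The key step is to upgrade the weak $L^1$ convergence into weak $W^{1,q}$ convergence on balls. Fix $q>1$ and, for $p\ge 1$ with $\gamma p\ge q$, apply Proposition~\ref{prop:embedding} (valid because (aInc)$_{p_n}$ implies (aInc)$_p$ with the same constant $L$ for all $p\le p_n$, as noted after Definition~\ref{defn:aInc}): for $n$ large
\[
\|f(\cdot,u_n,Du_n)\|_{L^p(\Omega)}\le (2L(|\Omega|+c))^{1/p}F_{\phi_n}(u_n).
\]
Combined with $f(x,u,\xi)\ge\alpha|\xi|^\gamma$, this yields a uniform bound on $\|Du_n\|_{L^{\gamma p}(\Omega)}$, hence on $\|Du_n\|_{L^q(\Omega)}$ by Hölder. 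Taking a ball $B\subset\subset\Omega$ (which has Lipschitz boundary), I use Sobolev--Poincaré on $B$ plus the bound $\|u_n\|_{L^1(B)}\le\|u_n\|_{L^1(\Omega)}\le C$ (from weak $L^1$ convergence) to get $\|u_n\|_{W^{1,q}(B)}\le C$. By uniqueness of the weak $L^1$-limit, $u_n\rightharpoonup u$ in $W^{1,q}(B)$ along the whole (relabelled) subsequence; in particular $u\in W^{1,q}_\loc(\Omega,\R^d)$.

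Now fix $p$ and apply Lemma~\ref{lem:youngMeasure} on $B$ (which is bounded) to the normal integrand $f^p\ge 0$, obtaining a Young measure family $(\mu_x)_{x\in B}$ with
\[
\liminf_n\int_\Omega f(x,u_n,Du_n)^p\,dx\ge\liminf_n\int_B f^p\,dx\ge\int_B\!\!\int_{\R^{Nd}}f(x,u(x),\xi)^p\,d\mu_x(\xi)\,dx.
\]
Coupled with the $L^p$-bound above (and the identity $\liminf a_n^p=(\liminf a_n)^p$ for $a_n\ge 0$), taking $p$-th roots gives, for each fixed $p$,
\[
\bigg(\int_B\!\!\int_{\R^{Nd}}f(x,u,\xi)^p\,d\mu_x(\xi)\,dx\bigg)^{1/p}\le(2L(|\Omega|+c))^{1/p}\liminf_n F_{\phi_n}(u_n).
\]
Letting $p\to\infty$ and applying Corollary~\ref{cor:youngMeasure} on $B$ (using level convexity of $f(x,u,\cdot)$) yields $\esssup_{x\in B}f(x,u,Du)\le\liminf_n F_{\phi_n}(u_n)$. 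Exhausting $\Omega$ by balls $B\subset\subset\Omega$ I conclude $F_\infty(u)=\esssup_\Omega f(\cdot,u,Du)\le\liminf_n F_{\phi_n}(u_n)$.

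I expect the main technical obstacle to be step (d): producing weak $W^{1,q}$ convergence on subdomains without regularity of $\partial\Omega$ and with only weak $L^1$ control of $u_n$. Handling this by localising to interior balls and combining Sobolev--Poincaré with the $L^1$-bound from weak convergence is what allows the theorem to cover finite-measure domains with irregular boundary. The second delicate point is the interchange of $\liminf_n$ and $\lim_p$, which is handled cleanly here because the embedding constant $(2L(|\Omega|+c))^{1/p}\to 1$ as $p\to\infty$, independently of $n$.
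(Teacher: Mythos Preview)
Your proof is correct and follows the same strategy as the paper: Proposition~\ref{prop} for the $\limsup$, then Proposition~\ref{prop:embedding} combined with localisation to balls, Poincar\'e, Young measures (Lemma~\ref{lem:youngMeasure}) and Corollary~\ref{cor:youngMeasure} for the $\liminf$, followed by exhaustion of $\Omega$ by balls. Your decoupling of the exponent $q$ used to obtain weak $W^{1,q}(B)$-convergence from the exponent $p$ appearing in the Young-measure inequality is a slightly cleaner organisation than the paper's (which ties them together via $p=q/\gamma$ and so implicitly lets the extracted subsequence vary with the exponent), but the argument is otherwise identical.
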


\begin{proof}
For simplicity we abbreviate $F_n:= F_{\phi_n}$, $n\in\N$. 
Let us first show that
\[
\limsup_{n \to \infty} F_n(u) \le F_\infty(u)
\]
for $u \in L^1(\Omega, \R^d)$. If $F_\infty(u) = \infty$, this is clear.
Otherwise, $F_\infty(u) < \infty$ so $f(\cdot, u, Du) \in L^{\infty}(\Omega)$ and 
Proposition~\ref{prop} gives that 
\[ 
\lim_{n \to \infty} F_n(u) = \lim_{n \to \infty} \|f(\cdot, u, Du)\|_{\phi_n} = \|f(\cdot, u, Du)\|_{\infty} = F_\infty(u).
\]

We next deal with the $\liminf$-inequality. Let $(u_n)_n \subset L^1(\Omega, \R^d)$ 
converge weakly to $u \in L^1(\Omega, \R^d)$. Without loss of generality, we assume that 
\[ 
\liminf_{n \to \infty} F_n(u_n) = \lim_{n \to \infty} F_n(u_n) = M < \infty.
\]
Then every subsequence of $(u_n)_n$ also has limit $M$. 
Recall that $p_n\to\infty$. Fix $q>\gamma$ and let $n_0 \in \N$ be such that
\[
p_n \ge \frac q\gamma \quad\text{and}\quad F_n(u_n) \le M + 1\qquad\text{for all } n \ge n_0.
\]
From $p_n \ge \frac q\gamma$ it follows that $\phi_n$ satisfies \ainc{q/\gamma} with 
the same constant $L$ as in the assumption. By Proposition~\ref{prop:embedding},
\begin{equation}\label{eq:norm-inequality}
\|f(\cdot, u_n, Du_n)\|_{q/\gamma}  
\le  
\underbrace{\big(2L(|\Omega|+c)\big)^\frac\gamma q}_{=:C_q} 
\|f(\cdot, u_n, Du_n)\|_{\phi_n} 
\le
C_1 (M+1).
\end{equation}
for every $n \ge n_0$.
We note that $C_q\searrow 1$ as $q\to \infty$.

We use this inequality and the assumed lower bound on $f$ to estimate the norm of the gradient:
\[
\|Du_n\|^{\gamma}_{q} 
= 
\big\||Du_n|^\gamma\big\|_{q/\gamma} 
\le \tfrac{1}{\alpha}\|f(\cdot, u_n, Du_n)\|_{q/\gamma}
\le \tfrac{C_1}{\alpha}  (M+1).
\]
It follows that $(\|Du_n\|_q )_n$ is bounded.
Then, up to a subsequence (depending on $q$), $(Du_n)_n$ weakly converges to a function $w$ in $L^q(\Omega, \R^{Nd})$. 
Since $(u_n)_n$ weakly converges to $u$ in $L^1(\Omega, \R^d)$, $w$ is the distributional gradient of $u$. 
%
Fix an open ball $B\subset\Omega$. 
We use a version of the Poincar\'e inequality where $L^q$-integrability is required only for the gradient. By the theorem in Section 1.5.2 \cite[p. 35]{MazP97}, 
we obtain that 
\begin{align*}
\|u_n\|_{L^q(B, \R^d)} 
&\le 
\|u_n- (u_n)_B\|_{L^q(B, \R^d)}  + \|(u_n)_B\|_{L^q(B, \R^d)} \\
&\le 
c(N, B) \|Du_n\|_{L^q(B, \R^d)} + |B|^{\frac1q-1} \|u_n\|_{L^1(B, \R^d)},
\end{align*}
Since $(\|Du_n\|_q )_n$ is bounded and $u_n \rightharpoonup u$ in $L^1(\Omega, \R^{d})$, the right hand side is uniformly bounded.
Thus reflexivity in $L^q(B, \R^{d})$ yields that $(u_n)_n$ has a weakly convergent subsequence.

We have found a subsequence, denoted still by $(u_n)_n$, with 
$u_n \rightharpoonup u$ in $W^{1,q}(B, \R^{d})$. 
Let $(\mu_x)_{x \in B}$ be the family of probability measures 
(the ``Young measure'') from Lemma~\ref{lem:youngMeasure} corresponding to this subsequence.
We use the first inequality from \eqref{eq:norm-inequality} in $B$ and 
Lemma~\ref{lem:youngMeasure} for the normal integrand $f^{q/\gamma}$ 
to conclude that 
\begin{align*}
\liminf_{n\to\infty} F_n(u_n) 
&\ge 
\liminf_{n\to\infty} \frac1{C_q} \bigg(\int_B f(x, u_n(x), Du_n(x))^{\frac q\gamma} \, dx\bigg)^{\frac{\gamma}{q}}\\
&\ge 
\frac1{C_q} \bigg( \int_B \int_{\R^{Nd}} f(x, u(x), \xi)^{\frac q\gamma} \, d\mu_x(\xi) \, dx\bigg)^{\frac{\gamma}{q}}.
\end{align*}
Take the limit inferior on the right-hand side as $q\to\infty$ and use 
$C_q\to 1$ as well as Corollary~\ref{cor:youngMeasure}:
\begin{align*}
\liminf_{n \to \infty} F_n(u_n) &
\ge \liminf_{q \to \infty}\bigg( \int_B \int_{\R^{Nd}} f(x, u(x), \xi)^{\frac q\gamma}\, d\mu_x(\xi)\, dx\bigg)^{\frac{\gamma}{q}}\\
&\ge \esssup_{x \in B} f(x, u(x), Du(x))
=
\sup_{x \in B\setminus A} f(x, u(x), Du(x)), 
\end{align*}
where $A\subset B$ is the exceptional set of measure zero related to the 
essential supremum. We can write $\Omega=\cup_{i\in \N} B_i$ as a countable union of balls $B_i$ 
and then we obtain an estimate for the supremum in $\Omega \setminus \cup_{i\in \N} A_i \subset 
\cup_{i\in \N} (B_i \setminus A_i)$, where $A_i$ is the exceptional set 
corresponding to $B_i$. Since also $\cup_{i\in \N} A_i$ has measure zero, 
this gives the essential supremum in $\Omega$. Thus we have proved the $\liminf$-inequality and 
completed the proof.  
\end{proof}

We conclude with a similar result for the modular-based energy functional. 
We use the notation $\displaystyle\phi^-(1):= \essinf_{x \in \Omega} \phi(x, 1)$ and
$\displaystyle\phi^+(1):= \esssup_{x \in \Omega} \phi(x, 1)$.

\begin{thm}\label{thm:main-modular}
Assume that $\Omega$ has finite measure. Let 
$f: \Omega \times \R^d \times \R^{Nd} \to \R$ be a normal integrand such that
\begin{itemize}
\item
$f(x,u,\cdot)$ is level convex for a.e.\ $x \in \Omega$ and every $u \in \R^d$;
\item
$f(x, u, \xi) \ge \alpha|\xi|^{\gamma}$ 
for some $\alpha, \gamma > 0$, a.e.\ $x \in \Omega$ and every $(u,\xi) \in \R^d \times \R^{Nd}$. 
\end{itemize}
Let $\phi_n\in \Phiw(\Omega)$ for $n\in\N$ and $L\ge 1$ and assume that 
\begin{itemize}
\item $\limsup_{n \to \infty} \phi_n^+(1)= 0$ and $\liminf_{n \to \infty} \phi_n^-(1)^{1/p_n}\ge 1$; 
\item $\phi_n$ satisfies \ainc{p_n} with  constant $L$ for some $p_n\ge 1$.
\end{itemize}
If $p_n \to \infty$ as $n \to \infty$, then
\[ 
\limsup _{n \to \infty} E_{n}(u) \le  E_\infty(u) 
\le \liminf _{n \to \infty} E_{n}(u_n)
\]
for all $u, u_n \in L^1(\Omega, \R^d)$ with $u_n \rightharpoonup u$ in $L^1(\Omega, \R^d)$.
\end{thm}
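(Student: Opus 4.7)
The plan is to follow the proof of Theorem~\ref{thm:main-norm} line by line, the only new work being the derivation of a uniform $L^{q/\gamma}(B)$-bound on $f(\cdot,u_n,Du_n)$ for balls $B\subset\Omega$ and fixed $q>\gamma$, which in the norm setting came for free from Proposition~\ref{prop:embedding}. The $\limsup$-inequality is easy: since $E_\infty$ only takes the values $0$ and $\infty$, it suffices to treat $E_\infty(u)=0$, i.e.\ $f(\cdot,u,Du)\le 1$ a.e. Applying \ainc{p_n} with $t=1$ and $\lambda=f(\cdot,u,Du)\le 1$ gives the pointwise bound $\phi_n(x,f)\le L f^{p_n}\phi_n(x,1)\le L\phi_n^+(1)$, so $E_n(u)\le L|\Omega|\phi_n^+(1)\to 0$ by the first asymptotic hypothesis.

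For the $\liminf$-inequality I would set $M:=\liminf_n E_n(u_n)$, assume $M<\infty$, and reduce to showing $\|f(\cdot,u,Du)\|_\infty\le 1$; since $E_\infty\in\{0,\infty\}$, this immediately forces $E_\infty(u)=0\le M$. Along a subsequence attaining the $\liminf$ one may assume $E_n(u_n)\le M+1$. Repeating the lower bound derivation in the proof of Proposition~\ref{prop:embedding}, but with $\phi_n^-(1)$ in place of the constant $1/c$, yields
\[
\phi_n(x,t)\ge \phi_n^-(1)\Bigl(\tfrac{t^{p_n}}{L}-1\Bigr)\qquad\text{for every }t\ge 0,
\]
which integrates to $\int_\Omega f^{p_n}\,dx\le \tfrac{L(M+1)}{\phi_n^-(1)}+L|\Omega|$. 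For a fixed open ball $B\subset\Omega$ and fixed $q>\gamma$, H\"older's inequality on $B$ (valid once $p_n\ge q/\gamma$) then gives
\[
\int_B f^{q/\gamma}\,dx\le |B|^{1-\tfrac{q}{\gamma p_n}}\Bigl(\tfrac{L(M+1)}{\phi_n^-(1)}+L|\Omega|\Bigr)^{\tfrac{q}{\gamma p_n}}.
\]
The crucial observation is that $\phi_n^-(1)^{-q/(\gamma p_n)}=\bigl(\phi_n^-(1)^{1/p_n}\bigr)^{-q/\gamma}$, which by the second hypothesis $\liminf_n\phi_n^-(1)^{1/p_n}\ge 1$ has $\limsup_n\le 1$; combined with $|B|^{1-q/(\gamma p_n)}\to|B|$ and $(L(M+1))^{q/(\gamma p_n)},(L|\Omega|)^{q/(\gamma p_n)}\to 1$, one obtains $\limsup_n\int_B f^{q/\gamma}\,dx\le |B|$.

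From this uniform bound the proof is literally the conclusion of Theorem~\ref{thm:main-norm}: the coercivity $f\ge\alpha|\xi|^\gamma$ converts the $L^{q/\gamma}(B)$-bound into an $L^q(B)$-bound on $Du_n$; Poincar\'e together with $L^1$-weak convergence promotes this to boundedness in $W^{1,q}(B,\R^d)$; a diagonal extraction over a countable cover of $\Omega$ by balls yields $u_n\rightharpoonup u$ in $W^{1,q}(B,\R^d)$ for each such $B$; Lemma~\ref{lem:youngMeasure} applied to the normal integrand $f^{q/\gamma}$ delivers
\[
\int_B\!\int_{\R^{Nd}}\! f(x,u,\xi)^{q/\gamma}\,d\mu_x(\xi)\,dx\le \liminf_n\int_B f^{q/\gamma}\,dx\le |B|;
\]
taking the $\gamma/q$-th root and letting $q\to\infty$ via Corollary~\ref{cor:youngMeasure} produces $\esssup_{x\in B}f(x,u,Du)\le 1$, and the union of balls closes the proof.

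The main obstacle is thus the quantitative balance in the H\"older step: the factor $L(M+1)/\phi_n^-(1)$ may blow up arbitrarily fast as $n\to\infty$, and only the interaction of the vanishing exponent $q/(\gamma p_n)$ with the sharp asymptotic hypothesis $\phi_n^-(1)^{1/p_n}\to\ge 1$ keeps the resulting $L^{q/\gamma}(B)$-bound finite. This is precisely why the asymptotic modular conditions on $\phi_n^\pm(1)$ are the right replacement for the pointwise two-sided bound on $\phi_n(x,1)$ used in Theorem~\ref{thm:main-norm}.
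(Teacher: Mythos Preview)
Your argument is correct, but it is organized differently from the paper's proof. For the $\limsup$-part you use \ainc{p_n} where the paper simply uses monotonicity of $\phi_n$ (since $f\le 1$ implies $\phi_n(x,f)\le \phi_n(x,1)\le \phi_n^+(1)$); this only introduces an inessential factor $L$. For the $\liminf$-part the paper does \emph{not} re-enter the Young-measure machinery: it normalizes $\hat\phi_n(x,t):=\phi_n(x,t)/\phi_n(x,1)$ so that $\hat\phi_n(x,1)\equiv 1$, converts the modular bound $\rho_{\hat\phi_n}(f(\cdot,u_n,Du_n))\le (M+1)/\phi_n^-(1)$ into the norm bound $\|f(\cdot,u_n,Du_n)\|_{\hat\phi_n}\le \max\{(L(M+1)/\phi_n^-(1))^{1/p_n},1\}$ via \ainc{p_n}, observes that this has $\limsup\le 1$ by the hypothesis on $\phi_n^-(1)^{1/p_n}$, and then invokes Theorem~\ref{thm:main-norm} for $(\hat\phi_n)$ as a black box to conclude $F_\infty(u)\le 1$. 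Your route instead bypasses the normalization, extracts the $L^{p_n}$-bound on $f$ directly, and then repeats the entire ball-localization/Young-measure argument of Theorem~\ref{thm:main-norm} by hand. The paper's packaging is shorter and makes the reduction to the norm case transparent; your direct approach is more self-contained and makes the quantitative role of the hypothesis $\liminf_n\phi_n^-(1)^{1/p_n}\ge 1$ very explicit at the H\"older step. One minor point: in your $\limsup$ computation for $(L(M+1)/\phi_n^-(1)+L|\Omega|)^{q/(\gamma p_n)}$ you split the sum into two pieces; strictly speaking you should bound $(a+b)^{\epsilon}\le 2^{\epsilon}\max(a^{\epsilon},b^{\epsilon})$ first, but since $2^{q/(\gamma p_n)}\to 1$ this changes nothing.
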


\begin{proof}
We again abbreviate $E_n:= E_{\phi_n}$, $n\in\N$. 
Let us first show that
\[
\limsup_{n \to \infty} E_n(u) \le E_\infty(u)
\]
for any $u \in L^1(\Omega, \R^d)$. If $E_\infty(u) = \infty$, this is clear.
Otherwise, $E_\infty(u) =0$ so $\|f(\cdot, u, Du)\|_\infty \le 1$. Thus we obtain
that
\[ 
\begin{split}
\limsup_{n \to \infty} E_n(u) &= 
\limsup_{n \to \infty} \int_\Omega \phi_n (x, f(\cdot, u, Du)) \, dx
\le \limsup_{n \to \infty} |\Omega|\, \phi_n^+ (1) =0 
\end{split}
\]
and the inequality follows. 

To deal with the $\liminf$-inequality, let $(u_n)_n \subset L^1(\Omega, \R^d)$ 
converge weakly to $u \in L^1(\Omega, \R^d)$. Without loss of generality, we assume that 
\[ 
\liminf_{n \to \infty} E_n(u_n) = \lim_{n \to \infty} E_n(u_n) = M < \infty.
\]
Define $\hat\phi_n\in \Phiw(\Omega)$ by $\hat\phi_n(x, t):=\frac{\phi_n(x, t)}{\phi_n(x, 1)}$.
Let $n_0 \in \N$ be such that $E_n(u_n) \le M+1$ for all $n\ge n_0$, so that 
\[
\int_\Omega \hat\phi_n(x, f(x, u_n, Du_n)) \, dx \le \frac{M+1}{\phi_n^-(1)}.
\]
Since $\hat\phi_n$ satisfies \ainc{p_n} with the same constant $L$, this yields
by Corollary 3.2.10 of \cite{HarH19} that
\[
\|f(\cdot, u_n, Du_n)\|_{\hat\phi_n} 
\le
\max\Big\{ \big( L \rho_{\hat\phi_n}(f(\cdot, u_n, Du_n)) \big)^{\frac1{p_n}}, 1 \Big\}
\le 
\max\bigg\{ \Big(\frac{L(M+1)}{\phi_n^-(1)}\Big)^{\frac1{p_n}}, 1 \bigg\},
\]
and consequently 
\[
\limsup_{n \to \infty} \|f(\cdot, u_n, Du_n)\|_{\hat\phi_n} 
\le 
\max\Big\{ \limsup_{n \to \infty} \phi_n^-(1)^{-\frac1{p_n}}, 1 \Big\}
=
1.
\]
Since $\hat\phi_n$ satisfies the requirements of Theorem~\ref{thm:main-norm} with the same 
$L$ and $c=1$, it follows that 
$F_\infty(u) \le 1$, i.e.\ $u \in W^{1,1}_\loc (\Omega, \R^d)$ and  $\|f(\cdot, u, Du)\|_{\infty} \le 1$.
Thus $E_\infty (u)\le \rho_\infty(1) =0$, and hence the $\liminf$-inequality holds.
\end{proof}

The assumptions of Theorem~\ref{thm:main-modular} differ from Theorem~\ref{thm:main-norm}. 
The next example shows that the condition $\frac1c \le \phi_n(x,1) \le c$ is not sufficient 
in the latter theorem. 

\begin{example}\label{eg:Einfty}
Let $p_n:=n$ for all $n$. Then $\phi_n(t):= t^n$ satisfies \ainc{n} and 
$\frac1c \le \phi_n(x,1) \le c$ but not the condition 
$\limsup_{n\to\infty}\phi_n^+(1) = 0$ of the previous theorem. 
Moreover, the claim of the theorem 
also does not hold: if $f$ and $u$ are such that $f(x, u, Du)=1$ a.e., then 
$E_n(u) = |\Omega|$ for every $n\in \N$ but  $E_\infty(u)=0$ so that 
\[
\limsup _{n \to \infty} E_{n}(u) >  E_\infty(u). 
\]
On the other hand, the energy $\phi_n(t):= \frac1n t^n$ does satisfy the 
conditions the previous theorem. 
%
%
\end{example}


\end{document}